\newtheorem{theorem}{Theorem}[section]
\theoremstyle{definition}
\newtheorem{definition}[theorem]{Definition}
\newtheorem{proposition}[theorem]{Proposition}
\theoremstyle{remark}
\newtheorem{remark}[theorem]{Remark}
\numberwithin{equation}{section} 
\theoremstyle{plain}
\newtheorem{acknowledgement}{Acknowledgement}
\newtheorem{corollary}[theorem]{Corollary}
\def\S{\mathbb S}
\def\C{\mathbb C}
\def\H{\mathbb H}
\def\P{\mathbb P}
\def\G{{\mathbb G}L(2, \H)}
\def\V{\mathbb V}
\def\T{\mathbb T}
\def\m{\mathcal M}
\def\E{\mathbb E}
\def\H{\mathbb H}
\def\h{\bf H}
\def\o{\ddot{\hbox{o}}}
\def\g{{\mathbb G}L}
\def\R{\mathbb R}
\def\V{\mathbb V}
\def\T{\mathbb T}
\def\P{\mathbb P}
\def\K{\mathbb K}
\def\S{\mathbb S}
\def \s{\mathcal S}
\newcommand{\secref}[1]{section~\ref{#1}}
\newcommand{\thmref}[1]{Theorem~\ref{#1}}
\newcommand{\eqnref}[1]{~{\textrm(\ref{#1})}}
\begin{document}

\keywords{Hyperbolic 5-space, isometries, quaternions, $z$-classes}

\title[Algebraic characterization of the isometries of the hyperbolic 5-space]
{Algebraic characterization of the isometries of the hyperbolic 5-space}

\author[Krishnendu Gongopadhyay]{Krishnendu Gongopadhyay}

\address{School of Mathematics, Tata Institute of Fundamental Research, Colaba, Mumbai 400005, India}

\email{krishnendug@gmail.com}


\subjclass{Primary 51M10; Secondary 37C85; 15A33}

\begin{abstract}
Let $\G$ be the group of invertible $2 \times 2$ matrices over the division algebra $\H$ of quaternions. $\G$ acts on the hyperbolic $5$-space as the group of orientation-preserving isometries. Using this action we give an algebraic characterization of the dynamical types of the orientation-preserving isometries of the hyperbolic $5$-space. Along the way we also determine the conjugacy classes and
 the conjugacy classes of centralizers or the $z$-classes in $\G$.
\end{abstract}
\maketitle

\section{Introduction}\label{intro}
Let $\h^{n+1}$ denote the $(n+1)$-dimensional hyperbolic space. The conformal boundary of the hyperbolic space is the $n$-dimensional sphere $\S^n$. Let $\E^n$ denote the $n$-dimensional euclidean space. We identify $\S^n$ with the extended euclidean space $\hat \E^n=\E^n \cup \{\infty\}$. Let $I_o(n+1)$ denote 
the group of orientation-preserving isometries of $\h^{n+1}$.   
 Classically, one uses the ball model of $\h^{n+1}$ to define the dynamical type of an isometry. In this model an
isometry is elliptic if it has a fixed point on the disk. An isometry is
 parabolic, resp. hyperbolic, if it is not elliptic and has one, resp. two 
fixed points on the conformal boundary of the hyperbolic space. If in addition to the fixed points one also consider the ``rotation-angles'' of an isometry, the above classification of the dynamical types can be made finer.

Let $S$ be an orthogonal transformation of $\E^n$. The rotation angles of $S$ correspond to each pair of complex conjugate eigenvalues of $S$. For each pair of complex conjugate eigenvalues $\{e^{i \theta}, e^{-i \theta}\}$, $-\pi \leq \theta \leq \pi$, $\theta \neq 0$, we assign a rotation angle to $S$.
 If $S$ has $k$ rotation angles, it is called a $k$-rotation. Now suppose $T$ is an isometry of $\h^{n+1}$. Then it follows from the description of the conjugacy class of $T$ that one can associate to $T$ an orthogonal transformation $A_T$ of $\E^n$. For our purpose it is enough to consider the case when $n$ is even, that is the dimension of $\h^{n+1}$ is odd. In this case, by Lefschetz fixed-point theorem,  every isometry has a fixed point on $\S^n$. Hence the restriction of $T$ to $\S^n$ can be conjugated to a similarity $f_T$ of $\E^n$. 
 The orthogonal transformation $A_T$ is associated to this similarity $f_T$ of $\E^n$. With respect to a suitable coordinate system, $f_T$ is of the form $A_T x +b$. 

 We call $T$ a $k$-rotatory elliptic (resp. $k$-rotatory parabolic, resp $k$-rotatory hyperbolic) if it is elliptic (resp. parabolic, resp. hyperbolic) and $A_T$ is a $k$-rotation. A $0$-rotatory parabolic (resp. a $0$-rotatory hyperbolic) is called a translation (resp. a stretch). For more details of this classification cf. \cite{kg}.

Recall that in dimension $3$, the group $\mathbb SL(2, \C)$, or equivalently, $\mathbb GL(2, \C)$ acts as the linear fractional transformations of the boundary sphere $\S^2$ and the dynamical types are characterized by the trace \cite{beardon}, \cite{rat} and $\frac{{trace^2}}{\hbox{det}}$ \cite{kg} respectively. In \cite{parker} Parker {\it et al}  
have given an algebraic characterization of the dynamical types of the
orientation-preserving isometries of the hyperbolic $4$-space. Parker {\it et al} have offered the characterization after identifying the group of isometries with a proper subgroup of $\G$ which preserves the unit disk on $\H$.

Our interest in this paper are the orientation-preserving isometries of the hyperbolic $5$-space. The conformal boundary $\S^4$ of the hyperbolic 5-space is identified with the extended quaternionic plane $\hat \H=\H \cup \{\infty\}$.  The group $\G$ acts on $\hat \H$ as the linear fractional transformations:
$$\begin{pmatrix} a & b \\ c & d \end{pmatrix} : Z \mapsto (aZ +b)(cZ +d)^{-1}.$$
Under this action  $\G$ can be identified with the identity component of the full group of M$\o$bius transformations of $\S^4$. We have proved this fact in \secref{linfrac}. Another proof can be found in \cite{wilker}. Comparable versions are available in \cite{ahlfors}, \cite{cw}, \cite{kellerhals}, \cite{kellerhals2}, \cite{waterman}. The proof we give here is different from the existing proofs and is more geometric.  

The group $\G$ can be embedded in $\mathbb GL(4, \C)$ as a subgroup. Using this embedding and the representation of the isometries of $\h^5$ as $2 \times 2$ matrices over the quaternions, we offer an algebraic characterization of the dynamical types. Our main theorem is the following.
\begin{theorem}\label{mainth}
Let $f$ be an orientation-preserving isometry of $\h^5$. Let $f$ be induced by $A$ in $\G$. Let $A_{\C}$ be the corresponding element in $\mathbb G L(4, \C)$. Let the characteristic polynomial
  of $A_{\C}$ be 
$$\chi(A_{\C})=x^4-2a_3x^3+a_2x^2-2a_1x+a_0.$$
Then $a_0>0$. 
Define, 
$$c_1={\frac{a_1^2}{a_0 \sqrt a_0}}\:, \; c_2={\frac{a_2}{ \sqrt a_0}}\:, \;
c_3={\frac{a_3^2}{ \sqrt a_0}}\:.$$ 
Then we have the following.

$(i)$ $A$ acts as an $2$-rotatory hyperbolic if and
only  if $c_1\neq c_3$.

$(ii)$ $A$ acts as a $2$-rotatory elliptic if and only if 
$$c_1=c_3, \;c_2 < c_1+2.$$

$(iii)$ $A$ acts as 
an $1$-rotatory hyperbolic if and only
if 
$$c_1=c_3, \;c_2 > c_1+2.$$ 

 $(iv)$ $A$ acts as a translation if and only if 
$$c_1=c_3,\; c_2= c_1+2, \; c_1=4, $$
and $A$ is not a real diagonal matrix.

$(v)$ $A$ acts as a stretch if and only if 

\centerline{$c_1=c_3$, $c_2= c_1+2$, and 
$ c_1 > 4$. }

$(vi)$ $A$ acts as an  $1$-rotatory elliptic or an
$1$-rotatory parabolic if and only if 

\centerline{$c_1=c_3$, $c_2= c_1+2$,  and $c_1 < 4$.} 

Moreover if the characteristic polynomial of $A_{\C}$ is equal to its minimal polynomial, then $A$ acts as an $1$-rotatory parabolic. Otherwise, it acts as an $1$-rotatory elliptic. 
\end{theorem}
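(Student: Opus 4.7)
The plan is to reduce Theorem~\ref{mainth} to a direct computation built on the classification of conjugacy classes of $\G$ established earlier in the paper. First I would invoke the standard embedding $\H\hookrightarrow M_2(\C)$, $z+wj\mapsto\bigl(\begin{smallmatrix}z&w\\-\bar w&\bar z\end{smallmatrix}\bigr)$, which promotes $A$ to $A_{\C}\in\g(4,\C)$ with the property that each quaternionic eigenvalue $\lambda$ of $A$ (picked as a complex representative of its conjugacy class in $\H^{\ast}$) contributes the conjugate pair $\lambda,\bar\lambda$ to the spectrum of $A_{\C}$. Hence if $\lambda,\mu\in\C$ are representatives of the two (possibly equal) eigenvalue classes of $A$, then
\[
\chi(A_{\C})=\bigl(x^2-2(\mathrm{Re}\,\lambda)\,x+|\lambda|^2\bigr)\bigl(x^2-2(\mathrm{Re}\,\mu)\,x+|\mu|^2\bigr),
\]
which immediately gives $a_0=|\lambda|^2|\mu|^2>0$ together with explicit polynomial expressions for $a_1,a_2,a_3$.

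Writing $p=|\lambda|$, $q=|\mu|$, $u=\mathrm{Re}\,\lambda$, $v=\mathrm{Re}\,\mu$ and substituting into the definitions of $c_1,c_2,c_3$, a short algebraic manipulation produces
\[
c_1-c_3=\frac{(p^2-q^2)(p^2v^2-q^2u^2)}{p^3q^3}.
\]
Thus $c_1=c_3$ holds exactly when $p=q$ (the elliptic locus) or when $qu=\pm pv$, equivalently $\cos^2(\arg\lambda)=\cos^2(\arg\mu)$ (the aligned-angle locus). On the first sub-regime a similar computation gives $c_2-c_1-2=-(u-v)^2/p^2\le 0$, vanishing exactly when $u=v$; on the second it gives $c_2-c_1-2=(p-q)^2(q^2-v^2)/(pq^3)\ge 0$, vanishing exactly when $\lambda$ and $\mu$ are real.

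Matching these analytic conditions against the conjugacy classification yields the six cases. A hyperbolic class has $p\ne q$; if the angles are not aligned it is a $2$-rotatory hyperbolic with $c_1\ne c_3$, proving (i). An elliptic class has $p=q$; if $u\ne v$ it is a $2$-rotatory elliptic and satisfies $c_2<c_1+2$, proving (ii). The aligned-angle hyperbolic classes with $v^2\ne q^2$ are $1$-rotatory hyperbolic and satisfy $c_2>c_1+2$, proving (iii). The degenerate stratum $c_2=c_1+2$ decomposes as $\{p=q,\,u=v\}$, where $c_1=4(u/p)^2\in[0,4]$, together with $\{p\ne q,\,qu=\pm pv,\,v^2=q^2\}$, where $c_1=(p+q)^2/(pq)>4$; the latter is the stretch case, giving (v).

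The main obstacle is the fine structure of the stratum $c_2=c_1+2$ with $c_1\le 4$, since translation, identity, $1$-rotatory elliptic, and $1$-rotatory parabolic all sit there and must be separated by data beyond $(c_1,c_2,c_3)$. When $c_1=4$ one has $u^2=p^2$, so $\lambda=\mu$ is real; then $A$ is either a real scalar matrix (trivial on $\h^5$ modulo the center) or a single non-diagonal Jordan block, which is precisely the translation case, yielding (iv). When $c_1<4$ the common eigenvalue $\lambda=\mu$ is non-real, and the conjugacy class is either $\mathrm{diag}(\lambda,\lambda)$ or a Jordan block; since the embedding turns a quaternionic Jordan block into two $2\times 2$ complex Jordan blocks (one for $\lambda$, one for $\bar\lambda$), the minimal polynomial of $A_{\C}$ has degree $4$ exactly in the parabolic case and degree $2$ in the elliptic case, giving the ``moreover'' clause and completing the proof.
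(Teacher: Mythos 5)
Your strategy is the same as the paper's: reduce to the conjugacy representatives $D_{r,\theta,\phi}$, $D_{r,s,\theta,\phi}$, $T_{r,\theta}$, write $\chi(A_{\C})=(x^2-2ux+p^2)(x^2-2vx+q^2)$ with $u=\Re\lambda$, $v=\Re\mu$, $p=|\lambda|$, $q=|\mu|$, and compare $c_1,c_2,c_3$ stratum by stratum. Your identity $c_1-c_3=\frac{(p^2-q^2)(p^2v^2-q^2u^2)}{p^3q^3}$ is correct, as are the computation $c_2-c_1-2=-(u-v)^2/p^2$ on $\{p=q\}$, the separation of translation from identity, and the minimal-polynomial argument for the ``moreover'' clause (a quaternionic Jordan block does embed as two complex Jordan blocks). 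Up to that point you are doing, in cleaner form, exactly what the paper does.

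The gap is in the branch $qu=-pv$ of your ``aligned-angle locus,'' i.e.\ $\cos(\arg\mu)=-\cos(\arg\lambda)$ with $p\neq q$. Your formula $c_2-c_1-2=(p-q)^2(q^2-v^2)/(pq^3)$ holds only on the branch $qu=+pv$; on $qu=-pv$ one gets instead $\bigl(q^2(p-q)^2-v^2(p+q)^2\bigr)/(pq^3)$, which can take either sign. Moreover these classes are not $1$-rotatory hyperbolic: the rotation part $X\mapsto e^{i\theta}Xe^{-i\phi}$ has rotation angles $\theta-\phi$ and $\theta+\phi$, so for $\phi=\pi-\theta\neq\theta$ both are nonzero and the element is $2$-rotatory hyperbolic even though $c_1=c_3$. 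Concretely, $A=\operatorname{diag}(2,-1)$ acts as $Z\mapsto -2Z$, a $2$-rotatory hyperbolic, yet $\chi(A_{\C})=(x-2)^2(x+1)^2$ gives $c_1=c_3=\tfrac12$ and $c_2=-\tfrac32<c_1+2$, so your argument (and criterion (ii) of the statement) files it as $2$-rotatory elliptic; similarly $\operatorname{diag}(3e^{i\pi/3},e^{2i\pi/3})$ satisfies $c_1=c_3=\tfrac13$, $c_2=c_1+2$, $c_1<4$ and would land in case (vi). You should know that the paper's own proof stumbles at exactly this spot: it passes from $c_1=c_3$ to $r\cos\theta+s\cos\phi=r\cos\phi+s\cos\theta$, silently discarding the negative square root, so it never sees the branch $\cos\phi=-\cos\theta$. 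Your factorization of $c_1-c_3$ actually exposes this defect in the statement itself; but as a proof of the theorem as written, your argument does not close this case, and no argument can without amending the hypotheses.
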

The theorem is proved in \secref{pmainth}. The action of $\G$ on $\hat \H$ also enables us to determine the conjugacy classes in $\G$. The conjugacy classes are classified in \secref{cc}. In \secref{z-class} we determine the centralizers of elements and the $z$-classes in $\G$.  We recall that two elements $x$ and $y$ in a group $G$ are said to be in the same $z$-class if the corresponding centralizers $Z(x)$ and $Z(y)$ are conjugate in $G$. From the description of the conjugacy classes we see that there are infinitely many of them in $\G$. But it turns out that the number of $z$-classes is finite. This gives another partition of the isometries into finitely many classes.  We compute the precise number of $z$-classes.
\begin{theorem}\label{zclass}
There are exactly seven $z$-classes in $\G$. 
\end{theorem}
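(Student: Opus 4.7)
The plan is to exploit the conjugacy class classification of $\G$ carried out in Section~\ref{cc}: every element is conjugate, after a suitable normalization, either to a diagonal matrix $\operatorname{diag}(\lambda_1,\lambda_2)$ with $\lambda_1,\lambda_2$ lying in the standard complex subfield $\C\subset\H$, or to a Jordan block $\bigl(\begin{smallmatrix}\lambda & 1\\ 0 & \lambda\end{smallmatrix}\bigr)$ with $\lambda\in\C$. For each such representative I compute the centralizer $Z(A)$ by direct matrix calculation: writing the entries of a candidate commuting matrix in the decomposition $\H=\C\oplus\C j$ and using the relation $j\alpha=\bar\alpha j$ for $\alpha\in\C$ turns $AB=BA$ into a $\C$-linear system. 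Since $Z(gAg^{-1})=gZ(A)g^{-1}$, counting $z$-classes reduces to counting conjugacy classes of the subgroups that appear as centralizers of the canonical representatives.

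A case-by-case check is expected to produce exactly seven centralizers up to conjugacy, one per $z$-class: (1) $\G$ itself, for $A=\lambda I$ with $\lambda\in\R^*$; (2) $\g(2,\C)$, for $A=\lambda I$ with $\lambda\in\C\setminus\R$; (3) the diagonal subgroup $\H^*\times\H^*$, for $A=\operatorname{diag}(\lambda_1,\lambda_2)$ with two distinct real eigenvalues; (4) $\H^*\times\C^*$, for diagonal $A$ with one real and one non-real eigenvalue; (5) $\C^*\times\C^*$, for diagonal $A$ with two non-real eigenvalues in distinct $\H^*$-conjugacy classes; (6) $\bigl\{\bigl(\begin{smallmatrix}x & y\\ 0 & x\end{smallmatrix}\bigr):x\in\H^*,\,y\in\H\bigr\}\cong\H^*\ltimes\H$, for the Jordan block with real $\lambda$; and (7) the abelian group $\bigl\{\bigl(\begin{smallmatrix}x & y\\ 0 & x\end{smallmatrix}\bigr):x\in\C^*,\,y\in\C\bigr\}$, for the Jordan block with non-real $\lambda$. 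These have real dimensions $16,8,8,6,4,8,4$. Among the three of dimension eight, case~(2) is reductive with non-compact derived subgroup $\operatorname{SL}(2,\C)$, case~(3) is reductive with compact derived subgroup $\operatorname{SU}(2)\times\operatorname{SU}(2)$, and case~(6) is non-reductive with four-dimensional abelian unipotent radical; the two of dimension four are both abelian, with (5) having maximal compact subgroup $(\S^1)^2$ while (7) has maximal compact subgroup $\S^1$. Thus the seven centralizers are pairwise non-isomorphic as Lie groups, hence pairwise non-conjugate in $\G$, establishing exactly seven $z$-classes.

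The main technical obstacle is the centralizer of the Jordan block $A=\bigl(\begin{smallmatrix}\lambda & 1\\ 0 & \lambda\end{smallmatrix}\bigr)$ with non-real $\lambda\in\C$. Expanding $BA-AB=0$ for $B=(b_{pq})$ yields $b_{21}=b_{11}\lambda-\lambda b_{11}$ together with $\lambda b_{21}=b_{21}\lambda$. The second equation forces $b_{21}\in\C$, while writing $b_{11}=a+bj$ with $a,b\in\C$ and using $j\lambda=\bar\lambda j$ gives $b_{11}\lambda-\lambda b_{11}=b(\bar\lambda-\lambda)j\in\C\cdot j$; compatibility of the two conditions requires $b=0$, so $b_{11}\in\C$ and $b_{21}=0$. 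The remaining commutation relations then force $b_{22}=b_{11}$ and $b_{12}\in\C$, producing case~(7). The parallel calculation for the real Jordan block is easier because a real $\lambda$ commutes with every quaternion, leaving the $j$-components of $b_{11}$ and $b_{12}$ unconstrained and producing the larger eight-dimensional centralizer of case~(6).
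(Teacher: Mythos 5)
Your proposal is correct and follows essentially the same route as the paper: reduce to the canonical conjugacy class representatives, compute each centralizer by the entrywise commutation equations in $\H=\C\oplus\C j$, and observe that exactly seven subgroups arise up to conjugacy. The only substantive difference is that you justify the pairwise non-conjugacy of the seven centralizers explicitly (via dimension, reductivity, and maximal compact subgroups), a point the paper leaves as ``clear from the description''; your enumeration and the computations themselves agree with the paper's.
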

Thus $\G$ provides a special example to the philosophy that was suggested in \cite{rkrjm} and was elaborated for linear and affine maps in \cite{rskajm}, and for isometries of pseudo-riemannian geometries of constant curvature in \cite{thesis}. 

Much after the announcement \cite{kg1} of the main result of this paper, I have come to know about the work of Cao \cite{cao}, Foreman \cite{foreman}, and, Parker and Short \cite{ps}. These authors also have given some algebraic characterizations of the dynamical types. In all these papers the authors have got the characterization by considering the representation of the isometries in $\S L(2, \H)$, which is the group $\G$ with some ``normalization". In our work we have given the algebraic characterization without normalizing the matrices, and hence it is different from the ones obtained by these authors.

\medskip {\bf Notations.} Let $\K$ be a division ring. Then  $\K^{\ast}$ is the multiplicative group of $\K$, i.e. $\K^{\ast}=\K-\{0\}$. For a non-zero $x$ in $\K$, $Z(x)$ denotes the centralizer of $x$ in $\K^{\ast}$. The group $\mathbb G L (m, \K)$ is the group of all invertible $m \times m$ matrices over $\K$. The group $\T(m, \K)$ is a subgroup of $\mathbb GL(m, \K)$ and consists of all upper-triangular matrices with $1$ on the diagonal. For a group $G$, by $PG$ we denote the group $G/Z(G)$, where $Z(G)$ is the center of $G$. 

\section {Preliminaries}\label{prel}
\subsection{ Dynamical types in hyperbolic and Euclidean geometry}\label{he}
Let $O(n)$ denote the orthogonal group of $\E^n$, i.e. $O(n)$ is the group of isometries of the positive-definite quadratic form on $\E^n$. To each similarity $f$ of $\E^n$, there is an associated  semisimple transformation which belongs to the compact group $O(n)$. In fact after choosing a coordinate system, one can write $f$ as
$$f(x)=r Ax +b,\;\;r>0,\;A \in O(n),\; b \in \E^n.$$
Note that $f$ is not an isometry of $\E^n$ if and only if $r \neq 1$. 

For each pair of complex conjugate eigenvalues $\{e^{i \theta}, e^{-i \theta}\}$, $0 < \theta \leq \pi$, of $A$, we assign a rotation angle to $f$.  
 An orientation-preserving similarity of  $\E^{n}$ is called a  {\it  $k$-rotatory elliptic (resp. parabolic)} 
if it is an isometry of $\E^n$ and has $k$-rotation angles and a fixed point (resp. no fixed point) on $\E^n$. An orientation-preserving
 similarity is called a {\emph{ $k$-rotatory hyperbolic}} if it has $k$-rotation angles and is not an isometry of $\E^n$. 

 Let $\mathcal S^+(n)$ denote the group of all
orientation-preserving similarities of
$\E^n$. Consider the ball model of $\h^{n+1}$, and identify its conformal boundary with $\hat \E^n=\E^n \cup \{\infty\}$. When $n=2m$, by Lefschetz fixed point theorem we see that every isometry of $\h^{2m+1}$ has a fixed point on the conformal boundary. Up to conjugation we can choose the fixed point to be $\infty$. It follows that the restriction of any element $f$ of $I_o(2m+1)$ to the conformal boundary is conjugate to an  element $s_f$ of $\mathcal S^+(2m)$. 
Also, by Poincar\'e extension every element $g$ of $\mathcal S^+(2m)$ can be extended uniquely to an 
isometry $\tilde g$ of $\h^{2m+1}$ . 
 Further, $f$ and $s_f$ have the same number of non-zero rotation-angles. Hence corresponding to each conjugacy class 
of $I_o(2m+1)$ we have a conjugacy class in $\mathcal S^+(2m)$ 
so that they are represented
by the same element and vice versa. This gives us the following.

\smallskip {\it The dynamical types of orientation-preserving 
isometries of   $\h^{2m+1}$ are in 
  bijective type-preserving correspondence with the dynamical
  types of orientation-preserving similarities of  $\E^{2m}$.} 

This allows us to identify the dynamical types of orientation-preserving isometries of $\h^5$ and the dynamical types of orientation-preserving similarities of $\E^4$.

\subsection { Review of Quaternions}
The space of all quaternions $\H$ is the four dimensional real algebra with
basis $\{1,\;i,\;j,\;k\}$ and multiplication rules
$i^2=j^2=k^2=-1$, $ij=-ji=k,\;jk=-kj=i,\;ki=-ik=j$. 
For a quaternion $x=x_0+x_1i+x_2j+x_3k$ we define
$\Re x=x_0$ and $\Im x=x_1i+x_2j+x_3k$. The {\it norm} of $x$ is
defined as $|x|=\sqrt{x_0^2+x_1^2+x_2^2+x_3^2}$. The conjugate of $x$ is defined by
$\bar x= x_0-x_1i-x_2j-x_3k$

We choose $\C$ to be the subspace of $\H$ spanned by $\{1, i\}$. 
With respect to this choice of $\C$ we can write,  $\H=\C \oplus \C j$.
That is, every element $a$ in $\H$ can be uniquely expressed
as $a=c_0+c_1j$, where $c_0$, $c_1$ are complex numbers. Similarly we can also write $\H=\C \oplus j \C$. 
\begin{definition} Two quaternions $a$ and $b$ are {\emph{similar}}
 if there exists a non-zero quaternion $v$ such that $a=vbv^{-1}$.
 \end{definition}
\begin{proposition} \cite{brenner} Two quaternions are similar if and only if
$\Re a=\Re b$ and $|a|=|b|$.
\end{proposition}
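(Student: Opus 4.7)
The forward direction is straightforward from basic properties of the quaternion norm and the centrality of real numbers. First I would note that the norm is multiplicative on $\H$, so $|a|=|vbv^{-1}|=|v|\,|b|\,|v|^{-1}=|b|$. For the real part, write $b=\Re b+\Im b$ and use that real scalars commute with everything, so $a-\Re b = v(b-\Re b)v^{-1} = v(\Im b)v^{-1}$. Squaring both sides gives $(a-\Re b)^2 = v(\Im b)^2 v^{-1} = -|\Im b|^2$, a non-positive real. A quaternion whose square is a non-positive real must itself be purely imaginary, so $\Re a = \Re b$.

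For the converse I would reduce to the imaginary part. Setting $r=\Re a=\Re b$, $\alpha=\Im a$, $\beta=\Im b$, the hypothesis gives $|\alpha|=|\beta|$, and it suffices to produce $v\neq 0$ with $v\beta v^{-1}=\alpha$. The explicit candidate I would try is
\[
v = \alpha\beta - |\alpha|^2.
\]
A direct computation, using the identities $\alpha^2=-|\alpha|^2$ and $\beta^2=-|\beta|^2=-|\alpha|^2$, gives $v\beta = -|\alpha|^2(\alpha+\beta) = \alpha v$, so $v\beta v^{-1}=\alpha$ whenever $v\neq 0$.

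The main (minor) obstacle is the degenerate case in which $v=0$. Expanding the product $\alpha\beta$ in terms of the Euclidean inner product and cross product on the imaginary quaternions, one checks that $v=0$ happens exactly when $\beta=-\alpha$ (assuming $\alpha\neq 0$; the case $\alpha=0$ forces $\beta=0$ and $v=1$ works trivially). In that remaining case I would pick any nonzero purely imaginary $w$ with $w$ orthogonal to $\alpha$ in the $3$-dimensional imaginary space. Such a $w$ anticommutes with $\alpha$, so $w\alpha w^{-1}=-\alpha=\beta^{-\text{inverse sign}}$, giving $w\beta w^{-1}=\alpha$. This disposes of every case, completing the proof.

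Alternatively, one could invoke the fact that the unit quaternions act on pure imaginary quaternions by conjugation as the rotation group $SO(3)$, which is transitive on spheres; but the explicit $v=\alpha\beta-|\alpha|^2$ construction is self-contained and needs no representation theory, which is why I would favor it.
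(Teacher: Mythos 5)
Your argument is correct, and it is worth noting that the paper itself offers no proof of this proposition: it is simply quoted from Brenner's paper on matrices of quaternions. So your contribution is a genuinely self-contained elementary proof where the paper relies on a citation. Both halves of your argument check out. In the forward direction, the multiplicativity of the norm and the observation that $(a-\Re b)^2=v(\Im b)^2v^{-1}=-|\Im b|^2$ is a non-positive real (hence $a-\Re b$ is purely imaginary, since a quaternion with real non-positive square has vanishing real part) are exactly right. In the converse, the identity $v\beta=-|\alpha|^2(\alpha+\beta)=\alpha v$ for $v=\alpha\beta-|\alpha|^2$ is verified by a one-line computation using $\alpha^2=-|\alpha|^2$ and $\beta^2=-|\alpha|^2$, and your analysis of the degenerate locus $v=0$ (which by Cauchy--Schwarz occurs only for $\beta=-\alpha$, handled by conjugating with any imaginary $w$ orthogonal to $\alpha$, which anticommutes with $\alpha$) closes the gap. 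The only blemish is the garbled expression ``$\beta^{-\text{inverse sign}}$'' in the anticommutation step; what you mean, and what your conclusion uses, is simply $w\alpha w^{-1}=-\alpha=\beta$, whence $w\beta w^{-1}=\alpha$. A cleaner but less elementary alternative, which you mention, is to use that unit quaternions act on the imaginary quaternions as $SO(3)$, transitively on spheres; your explicit construction avoids that machinery and is preferable for a self-contained exposition.
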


\begin{corollary}
The similarity class of every quaternion $\alpha$ contains a pair of
  complex conjugates with absolute-value $|\alpha|$ and real part equal to
  $\Re \alpha$.
  \end{corollary}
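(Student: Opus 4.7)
The plan is to construct the desired complex conjugate pair explicitly and then invoke the preceding proposition to confirm it lies in the similarity class of $\alpha$. First I would write $\alpha = x_0 + x_1 i + x_2 j + x_3 k$ and set $r = \sqrt{x_1^2 + x_2^2 + x_3^2} = \sqrt{|\alpha|^2 - (\Re \alpha)^2}$, which is a non-negative real number since $|\Re \alpha| \leq |\alpha|$.

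Next, using the chosen embedding $\C = \mathrm{span}\{1, i\} \subset \H$, I would consider the complex number $z = x_0 + r i$ and its conjugate $\bar z = x_0 - r i$. A direct computation shows
\[
\Re z = x_0 = \Re \alpha, \qquad |z|^2 = x_0^2 + r^2 = x_0^2 + x_1^2 + x_2^2 + x_3^2 = |\alpha|^2,
\]
and likewise for $\bar z$. Applying the previous proposition, both $z$ and $\bar z$ are similar to $\alpha$, so the similarity class of $\alpha$ contains the conjugate pair $\{z, \bar z\} \subset \C$, with $|z| = |\bar z| = |\alpha|$ and $\Re z = \Re \bar z = \Re \alpha$, as required.

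There is essentially no obstacle here: the statement is an immediate corollary of the characterization of similar quaternions, and the only non-trivial observation is that the quantity under the square root, $|\alpha|^2 - (\Re \alpha)^2$, is non-negative — which holds because $(\Re \alpha)^2 \leq |\alpha|^2$ for every quaternion.
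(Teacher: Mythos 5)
Your proof is correct and is exactly the immediate argument the paper intends (the corollary is stated there without proof, as a direct consequence of Brenner's characterization): you exhibit $z=\Re\alpha+\sqrt{|\alpha|^2-(\Re\alpha)^2}\,i$ and $\bar z$, check they share the real part and norm of $\alpha$, and apply the proposition. The only cosmetic remark is that when $\alpha$ is real the ``pair'' degenerates to $z=\bar z=\alpha$, which is consistent with the statement as written.
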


\begin{proposition}
The group $\G$ can be embedded in the group $\g (4, \C)$.
\end{proposition}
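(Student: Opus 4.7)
The plan is to realize $\H$ as a two-dimensional right $\C$-vector space, so that $\H^{2}$ becomes a four-dimensional right $\C$-vector space, and then identify each $A \in \G$ with its action by left matrix multiplication on $\H^{2}$. Using the decomposition $\H = \C \oplus j\C$ noted earlier, every quaternion has a unique expression $a = z_{1} + j z_{2}$ with $z_{1}, z_{2} \in \C$, and the ordered basis $\{1, j\}$ makes $\H$ into a right $\C$-module. The associativity $(\alpha w)\lambda = \alpha(w\lambda)$ for $\alpha, w \in \H$ and $\lambda \in \C$ is automatic in the division algebra $\H$, so left multiplication by any quaternion is right $\C$-linear.

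First I would use the relation $\lambda j = j \bar{\lambda}$ for $\lambda \in \C$ to compute the matrix of left multiplication by $a = z_{1} + j z_{2}$ on $\H \cong \C^{2}$ in the basis $\{1, j\}$. A direct expansion gives
$$a(w_{1} + j w_{2}) = (z_{1}w_{1} - \overline{z_{2}}\, w_{2}) + j(z_{2} w_{1} + \overline{z_{1}}\, w_{2}),$$
so the associated complex matrix is
$$\phi(a) = \begin{pmatrix} z_{1} & -\overline{z_{2}} \\ z_{2} & \overline{z_{1}} \end{pmatrix}.$$
I would then verify that $\phi : \H \to M(2, \C)$ is an injective $\R$-algebra homomorphism: additivity is immediate, multiplicativity $\phi(ab) = \phi(a)\phi(b)$ follows because composition of left multiplications corresponds to multiplication in $\H$, and injectivity is clear since $a$ is recovered from the first column of $\phi(a)$.

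Next, I would extend $\phi$ block-wise to $\Phi : M(2, \H) \to M(4, \C)$ by replacing each quaternion entry of a $2 \times 2$ matrix with its $2 \times 2$ complex block. Since $\phi$ is a ring homomorphism and block matrix arithmetic respects the underlying ring operations, $\Phi$ preserves addition, multiplication, and the identity matrix. Passing to unit groups via the relation $\Phi(A)\Phi(A^{-1}) = \Phi(I) = I_{4}$ yields the desired homomorphism $\Phi : \G \to \g(4, \C)$, and injectivity is inherited entry-by-entry from $\phi$.

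The only subtlety is ensuring that left multiplication by $A$ on column vectors in $\H^{2}$ really is $\C$-linear; this forces the choice of the \emph{right} $\C$-module structure on $\H$ (equivalently the decomposition $\H = \C \oplus j\C$ rather than $\H = \C \oplus \C j$), because $\C$ does not commute past $j$. Had we chosen the left module convention, left quaternionic matrix multiplication would fail to be $\C$-linear and the whole construction would collapse. Once this convention is fixed, there is no serious obstacle—the rest is routine verification.
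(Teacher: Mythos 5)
Your proof is correct and follows essentially the same route as the paper: both use the decomposition $\H=\C\oplus j\C$ (with $j$ on the left, exactly the point of the paper's Remark~3.5) to make left quaternionic multiplication act $\C$-linearly, and both send $A$ to the matrix of that action on $\H^2\cong\C^4$. The only difference is cosmetic — you build the $4\times 4$ matrix blockwise from a $2\times 2$ complex representation of each entry, while the paper writes $A=A_0+jA_1$ and forms $\begin{pmatrix} A_0 & -\bar A_1\\ A_1 & \bar A_0\end{pmatrix}$; the two embeddings differ only by a permutation of the basis of $\C^4$ and are conjugate.
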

\begin{proof} 
Let $A=\begin{pmatrix} a & b \\ c&
    d \end{pmatrix}$ be an element in $\G$. Write $a=a_0+ja_1$,
    $b=b_0+jb_1$, $c=c_0+jc_1$, $d=d_0+jd_1$. Then we can write 
$A=A_0+jA_1$, where
$$A_0=\begin{pmatrix}a_0 & b_0 \\ c_0 & d_0 \end{pmatrix},\;
A_1=\begin{pmatrix}a_1 & b_1 \\ c_1 & d_1 \end{pmatrix},$$
Now we define the map $p:M(2, \H)\to M(4, \C)$ as
$$p: A=A_0+jA_1 \mapsto \begin{pmatrix} A_0 & -\bar A_1 \\ A_1 & \bar
    A_0 \end{pmatrix}=A_{\C}.$$
The left action of $A$ on $(z_0 +jz_1, w_0 +jw_1)^t$ induces linear left action of $A_{\C}$ on $(z_0, w_0, z_1, w_1)^t$. 
It is easy to verify that $p$ is an injective homomorphism and
maps $\G$ into $\g (4, \C)$.
\end{proof}
\begin{remark}\label{rk1}
For more details of the proof of the above proposition cf. \cite{ask} section-3, \cite{lee} section-2. It is necessary to put $j$ on the left. In order to see this observe

\medskip $
\begin{pmatrix}a_0 + a_1j & b_0+ b_1j \\c_0 + c_1j & d_0 + d_1j\end{pmatrix} \begin{pmatrix} z_0 + z_1 j \\ w_0 + w_1 j \end{pmatrix}$

$=\begin{pmatrix} 
(a_0z_0 -a_1 \bar z_1 + b_0w_0 -b_1\bar w_1)+ (a_0 z_1 -a_1 \bar z_0+b_0 w_1-b_1 \bar w_0)j\\(c_0z_0 -c_1 \bar z_1 + d_0w_0 -d_1\bar w_1)+ (c_0 z_1 -c_1 \bar z_0+d_0 w_1-d_1 \bar w_0)j
     \end{pmatrix}$

\medskip \noindent and so $A$ does not act on the left complex linearly on $(z_0, w_0, z_1, w_1)^t$. 
\end{remark}

\medskip We call $\V$ a right vector space over $\H$, if $\V$ is an additive group and for $v$ in $\V$, $\lambda$ in $\H$, the scalar multiplication $v.\lambda$ is defined on the right.  Now consider $\H^2$ as a right vector-space over $\H$. 
Let $\P^1(\H)$ be the projective space over the quaternions. 
That is, $\P^1(\H)$ is obtained by identifying one-dimensional right subspaces in $\H^2$.  
Let $\begin{bmatrix}x \\
y \end{bmatrix}$ denotes the equivalence class of ${\begin{pmatrix} x \\
y \end{pmatrix}} \in \H^2-\{0\}$. We define $\infty$ to be the equivalence
class $\begin{bmatrix} x \\
0 \end{bmatrix}$.
We can identify $\P^1(\H)$ with the extended
  quaternionic space $\hat \H=\H\cup \infty$ by the map:
$${\begin{bmatrix} u \\
v \end{bmatrix}} \mapsto 
\left \{\begin{array}{ll} uv^{-1} \hbox { if $v \neq 0$}\\
\infty \hbox { otherwise}
\end{array} \right. $$
Topologically $\P^1(\H)$ is homeomorphic to the four sphere $\S^4$. 

The group $\G$ acts on $\H^2$ on the
left by the following action:
$$ A={\begin{pmatrix}a & b \\
c & d \end{pmatrix}}: {\begin{pmatrix} u \\
v\end{pmatrix}} \mapsto {\begin{pmatrix} au + bv\\
cu+dv\end{pmatrix}}.$$
This induces the action of $\G$ on $\P^1(\H)$ as the group
of quaternionic linear-fractional transformations:
$${\begin{pmatrix}a & b \\
c & d \end{pmatrix}}Z=(aZ+b)(cZ+d)^{-1},$$
where $Z=uv^{-1}$ corresponds to the element 
$\begin{bmatrix} u \\
v \end{bmatrix}$. From now on we identify $\P^1(\H)$ and $\hat \H$ with the boundary sphere $\S^4$ of $\h^5$, where $\H$ is identified with $\R^4$. 

The above action enables us to give a geometric proof of the following proposition regarding eigenvalues of elements in $\G$. 
\begin{definition} Let $A$ be an element in $\G$. An element $\lambda$ of $\H$ is 
said to be a {\emph{right eigenvalue}} of $A$ if $Av=v \lambda$ for some
non-zero $v$ in $\H^2$.
\end{definition}
\begin{proposition}
Every matrix in $\G$ has a right eigenvalue.
\end{proposition}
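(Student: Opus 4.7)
\medskip\noindent\textbf{Proof proposal.} The first step is to reinterpret the existence of a right eigenvalue as a fixed-point statement on $\P^1(\H)$. If $Av=v\lambda$ for a nonzero $v\in\H^2$, then $Av$ lies in the right one-dimensional $\H$-subspace $v\cdot\H$, so the class $[v]\in\P^1(\H)$ is fixed by $A$. Conversely, if $A[v]=[v]$, then $Av\in v\cdot\H$, so there is a unique $\mu\in\H$ with $Av=v\mu$, and this $\mu$ is necessarily nonzero because $A$ is invertible. Via the identification $\P^1(\H)\cong\hat\H\cong\S^4$ set up just above the proposition, the statement to be proved thus reduces to: every $A\in\G$, acting on $\S^4$ as a linear fractional transformation, has a fixed point.

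For this I would invoke the Lefschetz fixed-point theorem. The action $Z\mapsto(aZ+b)(cZ+d)^{-1}$, extended in the standard way at $\infty$ and at $-c^{-1}d$, is a self-homeomorphism of $\S^4$, with inverse coming from the action of $A^{-1}\in\G$. To apply Lefschetz, the degree of this homeomorphism must be $+1$. This would follow from the path-connectedness of $\G$: any continuous path from $A$ to $I$ inside $\G$ induces an isotopy from the action of $A$ on $\S^4$ to the identity, forcing $\deg A=1$. Since $\S^4$ has even dimension, the Lefschetz number is
\[
L(A)\;=\;1+(-1)^{4}\deg A\;=\;1+1\;=\;2\neq 0,
\]
so $A$ has a fixed point on $\S^4$, which by the dictionary of the first paragraph yields a right eigenvalue.

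The technical heart of the argument is therefore the path-connectedness of $\G$, and this is the step I expect to require the most care. I would handle it via a polar-type decomposition $\G=\mathrm{Sp}(2)\cdot P$, where $P$ is the space of positive-definite quaternionic Hermitian matrices (convex, hence contractible) and $\mathrm{Sp}(2)$ is the compact group preserving the standard Hermitian form on $\H^2$ (compact and connected, indeed simply connected); the product of two connected sets is connected. Alternatively, one can argue directly by reducing an arbitrary $A\in\G$ to $I$ through elementary quaternionic row/column operations, each of which lies on a visible continuous path in $\G$, taking care to avoid the singular locus. Either route establishes connectedness, after which the Lefschetz argument above concludes the proof.
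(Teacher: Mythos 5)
Your proposal is correct and follows essentially the same route as the paper: both translate the existence of a right eigenvalue into the existence of a fixed point of the induced linear fractional transformation on $\P^1(\H)\cong\S^4$ and then invoke the Lefschetz fixed-point theorem. The only difference is that you spell out the details the paper leaves implicit (the eigenvalue/fixed-point dictionary, the degree computation, and the connectedness of $\G$ needed to see that the degree is $+1$), all of which are handled correctly.
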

\begin{proof}Let $A$ be an element in $\G$. Let $\hat A$ be the linear fractional transformation of $\P^1(\H)$ induced by $A$. By Lefschetz fixed-point theorem, $\hat A$ has a fixed point on the projective space. 
The fixed points of $\hat A$  
correspond to $A$-invariant one dimensional right subspaces in $\H^2$. This completes the proof.
\end{proof}
The referee kindly pointed out that an easier proof of the above result can be found in \cite{lee} (Theorem-2). 
\section{ M$\o$bius transformations of $\S^4$}\label{linfrac}
Let $\m(4)$ denote the group of M$\o$bius transformations of $\S^4$, that is, $\m(4)$ is the group of diffeomorphisms of $\S^4$ which is generated by inversions in $3$-spheres.
Then we have the following
\begin{proposition}
$\m(4)=\m^+(4) \cup \m^{-}(4)$
  where
$$\m^+(4)=\bigg \{Z \mapsto (aZ+b)(cZ+d)^{-1}\;:\; {\begin{pmatrix} a & b \\
c & d \end{pmatrix}}\in \G \bigg \},$$
$$\m^-(4)=\bigg \{Z \mapsto (a \bar Z+b)(c \bar Z+d)^{-1}\;:\; 
{\begin{pmatrix} a & b \\ c & d \end{pmatrix}}\in \G \bigg \}.$$ 
\end{proposition}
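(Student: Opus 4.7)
The plan is to establish both containments $\m^+(4) \cup \m^-(4) \subseteq \m(4)$ and $\m(4) \subseteq \m^+(4) \cup \m^-(4)$. For the first, I would exhibit small generating sets for $\m^{\pm}(4)$ whose elements are visibly compositions of hyperplane reflections and spherical inversions. The group $\G$ is generated by its upper triangular subgroup together with the swap $J=\begin{pmatrix} 0 & 1 \\ 1 & 0 \end{pmatrix}$. An upper triangular element $\begin{pmatrix} a & b \\ 0 & d \end{pmatrix}$ acts on $\hat \H$ as the affine map $Z \mapsto a Z d^{-1}+bd^{-1}$, while $J$ acts as $Z \mapsto Z^{-1}$. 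Translations are products of two parallel hyperplane reflections; rotations $Z \mapsto p Z q$ with unit quaternions $p, q$ lie in $SO(4)$ and so factor as products of hyperplane reflections; real dilations are products of two concentric spherical inversions; and $Z \mapsto Z^{-1}=\bar Z/|Z|^2$ factors as inversion in the unit sphere $Z \mapsto \bar Z^{-1}$ followed by the conjugation $\sigma:Z \mapsto \bar Z$, both of which are M\o bius. Hence $\m^+(4) \subseteq \m(4)$. Every element of $\m^-(4)$ has the form $f_A \circ \sigma$ with $f_A \in \m^+(4)$, and since $\sigma$ is an orthogonal transformation of $\R^4$ (and so M\o bius), we also get $\m^-(4) \subseteq \m(4)$.

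For the reverse containment, I would exploit the fact that $\m^+(4)$ already acts transitively on $\hat \H$ (translations reach every finite point, and $J$ interchanges $0$ and $\infty$). Given $f \in \m(4)$, choose $g \in \m^+(4)$ with $g(f(\infty))=\infty$; then $g \circ f$ is a M\o bius transformation fixing $\infty$, hence its restriction to $\H=\R^4$ is a Euclidean similarity. Every orientation-preserving similarity has the form $Z \mapsto r R(Z)+c$ with $r>0$, $R \in SO(4)$, $c \in \H$; invoking the classical identification $SO(4)=\{Z \mapsto p Z q : p, q \in \S^3\}$ coming from the double cover $\S^3 \times \S^3 \to SO(4)$, this becomes $Z \mapsto (rp) Z q + c$, which is the $\hat \H$-action of $\begin{pmatrix} rp & c q^{-1} \\ 0 & q^{-1} \end{pmatrix} \in \G$ and so lies in $\m^+(4)$. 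An orientation-reversing similarity factors as the composition of an orientation-preserving similarity with $\sigma$, and so lies in $\m^-(4)$. Thus $g \circ f$ belongs to $\m^+(4) \cup \m^-(4)$, and by the closure identities $\m^+ \cdot \m^+ \subseteq \m^+$ and $\m^+ \cdot \m^- \subseteq \m^-$ (both immediate from the homomorphism $\G \to \mathrm{Bij}(\hat \H)$), so does $f = g^{-1} \circ (g \circ f)$.

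The key geometric input, and essentially the only real obstacle, is the identification $SO(4)=\{Z \mapsto p Z q : p,q \in \S^3\}$ arising from the double cover $\S^3 \times \S^3 \to SO(4)$. Once this is in hand, the proof is driven by the observation that $\m^+(4)$ already contains every orientation-preserving Euclidean similarity, so reducing to the stabilizer of $\infty$ collapses the question to something elementary. This is the geometric content that distinguishes the argument from the more computational proofs in the cited references, which manipulate matrix entries directly.
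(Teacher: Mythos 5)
Your proof is correct, but it takes a genuinely different route from the paper in both directions. For $\m^+(4)\cup\m^-(4)\subseteq\m(4)$ the paper does not factor $\G$ algebraically: it uses the Lefschetz fixed-point theorem to give every element of $\m^+(4)$ a fixed point on $\hat\H$, moves that point to $\infty$ using transitivity of $\m(4)$, and thereby covers $\m^+(4)$ by conjugates of the stabilizer $\m^+(4)_\infty=\s^+(4)$, whose generators it then writes as products of inversions and reflections. Your Bruhat-type factorization of $\G$ into upper-triangular matrices and the swap $J$ arrives at essentially the same list of generators (affine maps $Z\mapsto rpZq+c$ and $Z\mapsto Z^{-1}$) by pure linear algebra, trading the topological input for an elementary elimination argument; both are sound. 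The larger divergence is in $\m(4)\subseteq\m^+(4)\cup\m^-(4)$: the paper disposes of this direction in two lines by writing each generating inversion and reflection explicitly as a (conjugate-)linear fractional map, e.g.\ $\sigma_{a,r}(Z)=(a\bar Z-a\bar a+r^2)(\bar Z-\bar a)^{-1}$, so no structure theory is needed. You instead normalize $f$ so that it fixes $\infty$ and invoke the classical theorem that a M\"obius transformation of $\hat\E^n$ fixing $\infty$ restricts to a Euclidean similarity, together with the double cover $\S^3\times\S^3\to SO(4)$. Both facts are standard (the first is in Beardon or Ratcliffe), but the first is precisely the load-bearing step of that direction and should be cited or proved rather than asserted; the paper's explicit quaternionic formulas buy that direction for free, at the cost of being less conceptual. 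Net effect: your argument is more self-contained on the group-theoretic side and more dependent on classical M\"obius structure theory on the geometric side, roughly the mirror image of the paper's balance.
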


\begin{proof}
Let $G=\m^+(4) \cup \m^-(4)$.  In quaternionic expression, the inversion in
a sphere   $S_{a,r}= \{Z\;:\; |Z-a|=r \}$ is given by
$${\sigma_{{a,r}}}: Z \mapsto a+ r^2 (\bar Z-\bar a)^{-1}.$$ 
$\hbox{ Since, }\;\sigma_{a,r}(Z)=a+r^2(\bar Z -\bar a)=(a\bar Z -a \bar a +r^2)(\bar Z-\bar a)^{-1},\;$
hence $\sigma_{a,r}$ is an element in $G$. 
The reflections in hyperplanes  of $\H$ can be written in the form 
$$r_{\lambda,a}: Z \mapsto a-\lambda (\bar Z-\bar a)  \lambda,  \;|\lambda|=1.$$  
Since, $r_{\lambda, a}(Z)=(-\lambda \bar Z+\lambda \bar a+ a \bar \lambda)(\bar \lambda)^{-1}$, hence $r_{\lambda,a}$ is an element in $G$.  
 Thus every
inversion in a $3$-sphere of $\S^4$ is contained in the group $G$. Hence $\m(4) \subset G$.

 Now we shall show the reverse inclusion. The Euclidean similarities form a subgroup $\s(4)$ of $G$. In quaternionic expression, 
$$ \s^+(4)=\{Z \mapsto r \lambda Z \mu+b\;|\;\lambda, \mu, b \in
\H,\;|\lambda|=1=|\mu|\;\}.$$ 

For $P$ in $\H$, let $\m^+(4)_P$ be the stabilizer subgroup of $\m^+(4)$ at $P$. When $P=\infty$ we have
$$ \m^+(4)_{\infty}=\s^+(4).$$ 
Consider the transformations:

(i) the stretches $Z \mapsto  rZ$, $r>0$, $r \neq 1$,

(ii) the translations $Z \mapsto Z+a$,

(iii) the transformations $Z \mapsto \lambda Z$, $|\lambda|=1$, and  

(iv) $Z \mapsto \lambda Z \bar \lambda$, $|\lambda|=1$.

\noindent It is easy to see that the stretches can be expressed as a product of two inversions and the transformations of the type (ii), (iii) and (iv) can be expressed as a product of two reflections in hyperplanes.  Since $\s^+(4)$ is generated
by the transformations describe above, every element
in $\s^+(4)$ can be expressed as a product of inversions. Hence $\s^+(4)
\subset \m(4)$.

\noindent Note that the translations are
transitive on $\H$ and the map $g: Z \mapsto {\bar Z}^{-1}$ carries
$\infty$ to $0$. This shows that the group $\m(4)$ acts transitively on $\hat \H$. Hence for all $P \in \hat \H$, there exists $f \in \m(4)$
such that $f(\infty)=P$, and moreover,  $\m^+(4)_P=f\;o\;\s^+(4)\;o\;f^{-1}$. Thus $\m(4)$ contains $\cup_{P \in \hat \H}
\m^+(4)_P $. Since every element of $\m^+(4)$ has a fixed point on $\hat
\H$, we have $\m^+(4)=\cup_{P \in \hat \H}\m^+(4)_P$. Hence $\m^+(4) \subset \m(4)$.
 Since the group $G$ is generated by $\m^+(4)$ 
and the map $g:Z \mapsto \bar Z$ of $\m(4)$, hence 
$G \subset \m(4)$. 

This completes the proof.
\end{proof}

Note that the group $\m(4)$ acts as the full group of isometries of $\h^5$. In particular we have
\begin{corollary}
The group $\G$ acts as the group of orientation-preserving isometries of $\h^5$. In fact, the group of orientation-preserving isometries of $\h^5$ is isomorphic to $P\G$. 
\end{corollary}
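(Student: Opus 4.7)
The plan is to piece together the corollary from the preceding proposition together with two standard facts: (a) every Möbius transformation of $\S^4$ extends uniquely via the Poincaré extension to an isometry of $\h^5$, yielding an isomorphism $\m(4) \cong \operatorname{Isom}(\h^5)$; and (b) the extension of an inversion in a $3$-sphere is a hyperplane reflection of $\h^5$, which is orientation-reversing. Thus in $\m(4) \cong \operatorname{Isom}(\h^5)$, the parity of the number of inversion-factors records the orientation class of the extended isometry.

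First, I would verify that every element of $\m^+(4)$ is orientation-preserving. The proof of the preceding proposition wrote every generator of $\m^+(4)$ explicitly as a product of an \emph{even} number of inversions or hyperplane reflections: stretches, translations, and the rotations $Z \mapsto \lambda Z$ and $Z \mapsto \lambda Z \bar\lambda$ were each exhibited as a product of two reflections/inversions. Composing, every element of $\m^+(4)$ has an even-parity decomposition, so its Poincaré extension is orientation-preserving. Conversely, $\m^-(4) = g \cdot \m^+(4)$ with $g : Z \mapsto \bar Z$ a single reflection, so elements of $\m^-(4)$ are orientation-reversing. Hence $\m^+(4)$ is exactly the orientation-preserving part, giving an isomorphism $\m^+(4) \cong I_o(\h^5)$.

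Second, I would identify the kernel of the natural surjective homomorphism $\Phi : \G \to \m^+(4)$ defined by $A \mapsto (Z \mapsto (aZ+b)(cZ+d)^{-1})$. If $\Phi(A) = \mathrm{id}$, then evaluation at $Z = \infty$ forces $c = 0$, evaluation at $Z = 0$ forces $b = 0$, so $A = \operatorname{diag}(a,d)$, and then $aZd^{-1} = Z$ for every $Z \in \H$ forces $a = d$ and this common value to commute with every quaternion. Since the center of $\H^*$ is $\R^*$, we get $\ker \Phi = \R^* I = Z(\G)$. Consequently $\Phi$ descends to an isomorphism $P\G = \G/Z(\G) \cong \m^+(4) \cong I_o(\h^5)$, which is the claim.

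The only genuinely non-formal step is invoking the Poincaré extension and the parity identification between inversions on $\S^4$ and reflections of $\h^5$; once that is taken as given, the remaining work is the parity count using the explicit generating set from the previous proof plus the short kernel computation, neither of which presents a real obstacle.
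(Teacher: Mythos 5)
Your proposal is correct and follows the same route the paper intends: the paper's own proof is essentially a one-line remark that $P\G=\G/\{\lambda I:\lambda\in\R^{\ast}\}$, leaving the orientation parity count and the kernel computation implicit in the preceding proposition and the remark that $\m(4)$ is the full isometry group of $\h^5$. You have simply written out those omitted details (even-parity decomposition of elements of $\m^+(4)$ into inversions, and the identification of the kernel of $\G\to\m^+(4)$ with the real scalars), and both steps are carried out correctly.
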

\begin{proof}
Recall that, $P\G=\G/\{\lambda I: \lambda \in \R^{\ast}\}$. Hence the result follows.  
\end{proof}

\section {Conjugacy classes in $\G$}\label{cc}
\begin{theorem}
The conjugacy classes in $\G$ are represented by 

$(i)$ $T_{r, \theta}=
\begin{pmatrix}re^{i\theta} & 1 \\ 0 & re^{i
      \theta} \end{pmatrix}$, $0 \leq \theta \leq \pi$, $r>0$.

$(ii)$ $D_{r, \theta,\phi}=
\begin{pmatrix} re^{i\theta} & 0 \\ 0 & re^{i
 \phi} \end{pmatrix}$, $0 \leq \theta, \phi \leq \pi$,  $r>0$, $\theta \neq \phi$.

$(iii)$ $D_{r,s,\theta,\phi}=
\begin{pmatrix} re^{i\theta} & 0 \\ 0 & se^{i
 \phi} \end{pmatrix}$, 
$0 \leq \theta,\phi \leq \pi$,  $r>0$, $s>0$, $r \neq s$, $\theta \neq \phi$.

\end{theorem}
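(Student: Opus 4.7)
The plan is to use the proposition on right eigenvalues to put $A$ in upper triangular form, then carry out a case analysis based on whether the two diagonal entries are similar as quaternions. Given $A \in \G$, pick a right eigenvalue $\lambda$ with eigenvector $v \in \H^2$; by the corollary on similar quaternions, some conjugate $\mu^{-1}\lambda\mu$ equals $re^{i\theta}$ with $r>0$, $0 \leq \theta \leq \pi$, and then $A(v\mu) = (v\mu)re^{i\theta}$. Extending $v\mu$ to a right $\H$-basis of $\H^2$ and conjugating by the resulting change-of-basis matrix brings $A$ to the form $\begin{pmatrix} re^{i\theta} & \beta \\ 0 & \gamma \end{pmatrix}$ with $\gamma \in \H^{\ast}$. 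Conjugating by $\begin{pmatrix} 1 & 0 \\ 0 & \nu \end{pmatrix}$, which fixes the $(1,1)$ entry, normalizes $\gamma$ to $se^{i\phi}$ with $s>0$, $0 \leq \phi \leq \pi$, so $A$ is conjugate to $\begin{pmatrix} re^{i\theta} & \beta \\ 0 & se^{i\phi} \end{pmatrix}$.

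When $(r,\theta) \neq (s,\phi)$, conjugation by $\begin{pmatrix} 1 & x \\ 0 & 1 \end{pmatrix}$ changes the $(1,2)$ entry to $\beta + x s e^{i\phi} - r e^{i\theta} x$. The $\R$-linear map $L : \H \to \H$ sending $x \mapsto x s e^{i\phi} - r e^{i\theta} x$ has trivial kernel, for any nonzero $x$ in $\ker L$ gives $x(se^{i\phi})x^{-1} = re^{i\theta}$, forcing the similarity, and hence equality, of the canonical forms $re^{i\theta}$ and $se^{i\phi}$. So $L$ is bijective on the four-dimensional real vector space $\H$, $x$ can be chosen to kill $\beta$, and $A$ is diagonal; this gives case (ii) or (iii) according as $r = s$ or $r \neq s$. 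When $(r,\theta) = (s,\phi)$ and $\beta \neq 0$, the aim is to normalize $\beta$ to $1$. Conjugation by $\begin{pmatrix} 1 & x \\ 0 & 1 \end{pmatrix}$ now shifts $\beta$ by the commutator $xre^{i\theta} - re^{i\theta}x$, while conjugation by $\begin{pmatrix} c & 0 \\ 0 & 1 \end{pmatrix}$ with $c$ in the centralizer of $re^{i\theta}$ rescales $\beta$ on the left by $c$. Writing $\H = \C \oplus \C j$, a direct computation in the basis $\{1,i,j,k\}$ shows that the image of the commutator map is exactly $\C j$, so $\beta$ can be reduced modulo $\C j$ to its $\C$-component, and left multiplication by a suitable element of the centralizer then brings that component to $1$, yielding representative (i).

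To see that the listed representatives are pairwise non-conjugate, pass to the complex embedding $A \mapsto A_{\C} \in \mathbb{G}L(4,\C)$ constructed in the preceding section. For a diagonal $D$ the eigenvalues of $A_{\C}$ are $\{re^{\pm i\theta},\, se^{\pm i\phi}\}$, while for $T_{r,\theta}$ they are $re^{\pm i\theta}$ each with multiplicity two and $(T_{r,\theta})_{\C}$ is not diagonalizable, distinguishing it from any diagonal case with matching spectrum. The unordered pair of similarity classes on the diagonal is recoverable from the spectrum of $A_{\C}$, so distinct parameters produce distinct conjugacy classes. The main obstacle is the case $(r,\theta) = (s,\phi)$: the non-centrality of $re^{i\theta}$ for $\theta \in (0,\pi)$ means the commutator map reaches only the two-real-dimensional subspace $\C j$ rather than all of $\H$, so the commutator move and the centralizer rescaling have to be combined carefully, and one must keep track of the dichotomy between the generic situation producing the Jordan form (i) and the degenerate situation in which $\beta \in \C j$ and $A$ collapses onto the conjugacy class of the scalar $re^{i\theta} I$.
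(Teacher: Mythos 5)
Your proposal follows the same overall strategy as the paper: triangularize using a fixed point/right eigenvector, conjugate the diagonal entries into $\{re^{i\theta}: r>0,\ 0\le\theta\le\pi\}$, then dispose of the off-diagonal entry by a case split on whether the two diagonal entries are similar, and finally separate the representatives via the embedding into $\mathbb{G}L(4,\C)$. The difference is one of execution: the paper passes to the induced affine map $Z\mapsto ce^{i\theta}Ze^{-i\phi}+b$ of $\hat\H$ and computes its fixed point explicitly in the decomposition $\H=\C\oplus\C j$, whereas you stay at the matrix level and argue that the Sylvester operator $L(x)=xse^{i\phi}-re^{i\theta}x$ is injective (hence surjective on $\R^4$) whenever the two diagonal entries are non-similar. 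Your injectivity argument is a clean, uniform replacement for the paper's componentwise inversion of $1-ce^{i(\theta\pm\phi)}$, and your use of non-diagonalizability of $(T_{r,\theta})_\C$ to separate $T_{r,\theta}$ from the diagonal classes is as good as the paper's fixed-point argument. (Both your account and the paper's own proof produce the representatives $D_{r,\theta,\theta}$ and $D_{r,s,\theta,\theta}$, which the side-conditions ``$\theta\neq\phi$'' in the statement appear to exclude; that is a defect of the statement, not of either proof.)

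There is one concrete slip in the case $(r,\theta)=(s,\phi)$. The image of the commutator map $x\mapsto xre^{i\theta}-re^{i\theta}x=r\sin\theta\,(xi-ix)$ is exactly $\C j$ only when $\sin\theta\neq 0$; for $\theta\in\{0,\pi\}$ the entry $re^{i\theta}=\pm r$ is central, the commutator map is identically zero, and the step ``reduce $\beta$ modulo $\C j$ to its $\C$-component'' fails. The conclusion survives because in that boundary case the centralizer of $re^{i\theta}$ is all of $\H^{\ast}$, so the rescaling move $\beta\mapsto c\beta$ alone normalizes any nonzero $\beta$ to $1$; but as written your argument does not separate this case, and you should. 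With that repair (and an explicit statement that the degenerate subcase $\beta\in\C j$, $\sin\theta\neq 0$, lands on the diagonal class $\mathrm{diag}(re^{i\theta},re^{i\theta})$), the proof is complete.
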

\begin{proof}
Consider $\H$ as the
two-dimensional left vector-space over $\C$
with basis $\{1, j\}$, i.e. we take $\H=\C \oplus \C j$. We identify $\hat \H$ with the boundary sphere of $\h^5$. 

Let $A=\begin{pmatrix}a & b \\ c & d \end{pmatrix}$ be an element in $\G$. The induced M$\o$bius transformation on $\hat \H$ is 
$f: Z \mapsto (aZ+b)(cZ+d)^{-1}$. We have seen that every element of
$\m^+(4)$ has a fixed point on $\hat \H$. 
Conjugating $f$ we can take the
fixed point to be $\infty$. So, upto conjugation, we take
$A={\begin{pmatrix} a  & b \\
0 & d \end{pmatrix}}$.  
 Since every quaternion is conjugate to an element in $\C$, 
let $a=v\{le^{i \theta}\}v^{-1}$, $d=w\{me^{i \phi}\}w^{-1}$, 
where $l$, $m$ non-zero reals, $0 \leq {\theta, \phi} \leq 2 \pi$.
Let $D=\begin{pmatrix} v & 0 \\ 0 & w \end{pmatrix}$. 
Then $DAD^{-1}= \begin{pmatrix} le^{i \theta} & v^{-1}bw \\
0 & me^{i \phi}\end{pmatrix}$. 
 Now note that $e^{i \theta}$ and $e^{-i
      \theta}=e^{i(2 \pi-\theta)}$ are conjugate to each other in $\H$.
If one, resp. both of $\theta$, $\phi$ are greater than $\pi$,  
 conjugate $DAD^{-1}$ by one, resp. both of the matrices
$\begin{pmatrix} j & 0 \\ 0 &
1 \end{pmatrix}$, 
$\begin{pmatrix} 1 & 0 \\ 0 &
j \end{pmatrix}$.   
 
Thus every element in $\G$ is conjugate to an element of the form
 $U=\begin{pmatrix} re^{i \theta} &
  d \\ 0 & se^{i \phi} \end{pmatrix}$, $d \in \H$, $0 \leq \theta, \phi
\leq \pi$, $r$, $s$ are positive reals.  

Now consider a matrix $U$ as above. 

 \noindent {\it Case (i). }Suppose $r=s \;,
\theta \neq \phi$. Then the induced M$\o$bius transformation by $U$ is  
$$f: Z \mapsto e^{i \theta} Z e^{-i \phi}+b,\;\;b=s^{-1}de^{-i\phi}$$
We write $Z=z+wj$,
$b=b_0+b_1j$. Then we have,
$$f(z+wj)=(e^{i(\theta-\phi)}z+b_0)+(e^{i(\theta+\phi)}w+b_1)j.$$
Let $Z_0=z_o+z_1j$, where 
$z_0= b_0(1-e^{i(\theta-\phi)})^{-1}$ and 
$z_1= b_1(1-e^{i(\theta+\phi)})^{-1}$.
Conjugating $f$ by the map $g:Z \mapsto Z-Z_0=X=x_0+x_1j$ we have 
\begin{eqnarray*}
& &gfg^{-1}(X)= gf(X+Z_0)\\
&=&g\big((e^{i(\theta-\phi)}(x_0+z_0)+b_0)+
(e^{i(\theta+\phi)}(x_1+z_1)+b_1)j\big)\\
&=&(e^{i(\theta-\phi)}x_0+c_0)+
 (e^{i(\theta+\phi)}x_1+c_1)j\:,
\end{eqnarray*}
where,  $c_0=\{e^{i(\theta-\phi)}z_0+b_0-z_0\}=0,\;
c_1=\{e^{i(\theta+\phi)}z_1+b_1-z_1\})=0$.
 
\noindent That is, $gfg^{-1}(X)=e^{i \theta}Xe^{-i \phi}$.
Hence lifting $g$ in $\G$ we see that the matrix $U$ is conjugate to the
diagonal matrix
$D_{r,\theta, \phi}=
\begin{pmatrix} re^{i \theta} & 0 \\ 0 & re^{i \phi} \end{pmatrix}$, $0
\leq \theta, \phi \leq \pi.$ 

\noindent{\it Case (ii). }Let $r \neq s$. That is, $U$
induces the transformation
$$\alpha: Z \mapsto ce^{i \theta} Z e^{-i \phi}+b,\;\;
c=rs^{-1} \neq 1, \; c>1.$$
It is easy to see that 
$$\beta \alpha \beta^{-1}=\alpha_0,$$ 
where 
$$\alpha_0: W \mapsto  ce^{i \theta} W e^{-i \phi},\;\;\beta: Z \mapsto Z-Z_0=W,$$
$$Z_0={b_0(1-ce^{i(\theta-\phi)})^{-1}}+{b_1(1-ce^{i(\theta+\phi)})^{-1}}j.$$
Hence the matrix $U$ is conjugate to 
 $$D_{r,s,\theta,\phi}=
\begin{pmatrix}re^{i\theta} & 0 \\ 0 & se^{i\phi} \end{pmatrix},$$
$0 \leq
      \theta, \phi \leq \pi$, $r$, $s$ are positive reals, $r \neq s$. 

\noindent {\it Case (iii). }Let $r=s$, $\theta=\phi$. In
this case $U$ acts as 
$\tau:Z \mapsto e^{i \theta}Ze^{-i\theta}+b$. We see that
$$\eta\tau\eta^{-1}=\tau_1,$$
 where
$$\tau_1: W \mapsto  e^{i\theta} W e^{-i\theta}+b_0,$$ 
$$\eta: Z \mapsto Z-Z_0, \;\;Z_0=b_1(1-e^{i 2 \theta})^{-1}j.$$  
 If $b_0 \neq 0$, conjugating $\tau_1$ by the map  $Z \mapsto b_0^{-1}Z$, 
we get $\tau_{\theta}:W \mapsto e^{i \theta}W e^{-i\theta}+1$.
 Otherwise, $\tau_1$ is conjugate
to $\tau_{0, \theta}:W \mapsto e^{i \theta}W e^{-i\theta}$. 
 Thus $\tau$ is conjugate to either
$\tau_{\theta}$ or $\tau_{0, \theta}$. This implies that any matrix 
$\begin{pmatrix} re^{i\theta} & d \\
 0 & re^{i \theta} \end{pmatrix} $,  $b_0 \neq 0$, in $\G$ is
 conjugate to $T_{r, \theta}=\begin{pmatrix} re^{i\theta} & 1 \\
0 & re^{i \theta} \end{pmatrix}$,
 $0 \leq \theta \leq \pi$. 

Now note that the matrices $D_{r,s,\phi,\psi}$ or $D_{r, \phi, \psi}$ have at least one fixed point on $\H$, whence
$T_{r, \theta}$ has no fixed point on $\H$. Hence $T_{r,\theta}$ is not conjugate to any of the
matrices $D_{r,s,\phi,\psi}$ or $D_{r, \phi, \psi}$. Also if $(r, \phi) \neq (s, \psi)$,  $T_{r, \theta}$ is not conjugate to $T_{s, \phi}$. 
Moreover the embedded images of $D_{r,s, \theta, \phi}$, $D_{r, \theta, \phi}$ and $D_{r, \theta, \theta}$ in $\mathbb GL(4, \C)$ have distinct characteristic polynomials. Hence no two of $D_{r,s, \theta, \phi}$, $D_{r, \theta, \phi}$ and $D_{r, \theta, \theta}$ are  conjugate to each-other. 

This completes the proof.
\end{proof}

\begin{corollary} 
Let $A$ be an element in $\G$ and $A_{\C}$ in be the 
corresponding element in  $\g(4, \C)$. Then all the
co-efficients of the characteristic polynomial of $A_{\C}$ are
real. In particular, determinant of $A_{\C}$ is a real positive
number.
\end{corollary}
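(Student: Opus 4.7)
The plan is to reduce the claim to the three canonical representatives $T_{r,\theta}$, $D_{r,\theta,\phi}$, $D_{r,s,\theta,\phi}$ produced by the preceding theorem. Since $p:M(2,\H)\to M(4,\C)$ is an injective ring homomorphism, it carries conjugation in $\G$ to conjugation in $\g(4,\C)$; in particular, the characteristic polynomial of $A_{\C}$ is a conjugation invariant of $A$ in $\G$. Therefore it suffices to verify reality of the coefficients of $\chi(A_{\C})$ and positivity of $\det A_{\C}$ for each of the three canonical forms, after which the corollary follows for arbitrary $A\in\G$.

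The key observation is that every canonical matrix listed above has all its entries in $\C=\R\oplus\R i\subset\H$. Hence in the decomposition $A=A_0+jA_1$ one has $A_1=0$, and the embedding collapses to the block-diagonal form
$$A_{\C}=\begin{pmatrix}A_0 & 0 \\ 0 & \bar A_0\end{pmatrix}.$$
From this block structure,
$$\chi(A_{\C})(x)=\det(xI-A_0)\cdot\det(xI-\bar A_0)=\chi(A_0)(x)\cdot\overline{\chi(A_0)(x)},$$
which is a complex polynomial multiplied by its own complex conjugate and therefore has real coefficients. For the last assertion one reads off $\det A_{\C}=\det A_0\cdot\det\bar A_0=|\det A_0|^2\ge 0$; and in each canonical form $A_0$ is upper triangular with nonzero diagonal entries of the shape $re^{i\theta}$ or $se^{i\phi}$, so $\det A_0\ne 0$ and hence $\det A_{\C}>0$.

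This argument is essentially immediate given the conjugacy classification just established, so there is no genuine obstacle beyond bookkeeping. The main conceptual point is that $p$ sends a matrix $A\in M(2,\C)\subset M(2,\H)$ to the block matrix $\mathrm{diag}(A,\bar A)$, forcing the spectrum of $A_{\C}$ to be closed under complex conjugation; this alone guarantees that $\chi(A_{\C})\in\R[x]$, and positivity of the determinant is just the observation that a nonzero product $\det(A_0)\,\overline{\det(A_0)}$ is a positive real number.
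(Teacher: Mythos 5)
Your proof is correct and follows the same route as the paper: both reduce the claim, via conjugacy invariance of the characteristic polynomial under the embedding $p$, to the canonical representatives from the preceding theorem. You merely make explicit the verification the paper leaves implicit, observing that each representative lies in $M(2,\C)$ so that $A_{\C}=\mathrm{diag}(A_0,\bar A_0)$ and $\chi(A_{\C})=\chi(A_0)\overline{\chi(A_0)}$ has real coefficients with $\det A_{\C}=|\det A_0|^2>0$.
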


\begin{proof}
 Since the characteristic polynomial
is a conjugacy invariant for any matrix, the corollary follows by
embedding the conjugacy class representatives of $\G$ into
$\g(4, \C)$. 
\end{proof}

\begin{remark}
From the action of the conjugacy class representatives on $\hat \H$ it is clear that an element $A$ in $\G$ acts as a (i) $2$-rotatory (resp. $1$-rotatory) elliptic if it is conjugate to $D_{r, \theta, \phi}$, $\theta \neq \phi$ (resp. $\theta = \phi \neq 0,\pi$), (ii) $A$ acts as $2$-rotatory hyperbolic, $1$-rotatory hyperbolic or a stretch if it is conjugate to $D_{r,s, \theta, \phi}$, and $\theta \neq \phi$, $\theta=\phi \neq 0$, or $\theta=\phi=0,\pi$ respectively, (iii) $A$ acts as a $1$-rotatory parabolic or a translation if it is conjugate to $T_{r, \theta}$, and $\theta \neq 0$ or $\theta=0,\pi$ respectively. 
\end{remark}

\section{Centralizers, and the $z$-classes in $\G$}\label{z-class}
We now compute the centralizers of each conjugacy class representative. First consider,
$T=\begin{pmatrix}re^{i \theta} & 1 \\ 0 & re^{i \theta}\end{pmatrix}$. 
Let $A=\begin{pmatrix} a & b \\ c & d \end{pmatrix}$ be an element in $Z(T)$. Then  the relation $AT=TA$ yields the following four equations:
$$ (i) \;r ae^{i \theta}=re^{i \theta}a +c,\;\; (ii) \;rce^{i \theta}= re^{i \theta}c,$$
$$(iii)\; a + rb e^{ i \theta}= d+ r e^{i \theta} b,\;\;(iv)\; c+rde^{i \theta}=re^{i\theta}d.$$
From $(ii)$ it follows at once that $c \in Z(re^{i\theta})$. If $c \neq 0$, then (i) yields, 
$$1=rc^{-1}ae^{i \theta}-rc^{-1}e^{i \theta}a=rc^{-1}ae^{i \theta}-re^{i \theta}c^{-1}a.$$ 
Equating the real parts on both sides we get $1=0$, which is impossible. Hence we must have $c=0$. Thus, $a$, $d$ are also in $Z(re^{i \theta})$. Now $(iii)$ yields,
$$rbe^{i \theta}-re^{i \theta}b=d-a.$$
Note that $d-a$ is in $Z(re^{ i \theta})$. Hence we must have $d=a$. This follows by using similar arguments as above. Thus
$$Z(T)=\bigg\{\begin{pmatrix}a & b \\ 0 & a\end{pmatrix}\;|\; a \in Z(re^{i \theta}),\;b  \in Z(re^{i \theta})\cup\{0\} \bigg \}.$$
Now note that 
$$Z(re^{i\theta})=\left \{\begin{array}{ll} \H^{\ast} \;\hbox { if $\theta=0$,}\\
\C^{\ast} \;\hbox { otherwise}
\end{array} \right.$$
and for $a \neq 0$, we have 
$$\begin{pmatrix} a & b \\0 & a \end{pmatrix}=\begin{pmatrix}a & 0 \\ 0 & a \end{pmatrix} {\begin{pmatrix} 1 & a^{-1} b \\0 & 1 \end{pmatrix}}.$$
This shows that 
$$Z(T)=\left\{\begin{array}{ll} \H^{\ast} \rtimes \T(2, \H) \;\hbox { if }\theta=0,\\
\C^{\ast} \rtimes \T(2, \C)  \;\hbox { otherwise,}
\end{array} \right.$$
where $\rtimes$ denotes the semi-direct product. 

\medskip Now consider $D=\begin{pmatrix} re^{i \theta} & 0 \\ 0 & se^{i \phi}\end{pmatrix}$. From the relation $AD=DA$ it follows that  
$$Z(D)=\bigg\{\begin{pmatrix}a & 0 \\ 0 & d \end{pmatrix} \;|\; 
a \in Z(re^{i \theta}),\; d \in Z(se^{i \phi}) \bigg\}.$$

{\it Case} (i).   $\;r \neq s$, or $\theta \neq \phi$. In this case,
$$Z(D)=\left\{\begin{array}{lll} \H^{\ast} \oplus \H^{\ast}\; \hbox{ if }\theta=0=\phi, r \neq s \\ \H^{\ast} \oplus \C^{\ast} \;\hbox{ if one of }\theta,\; \phi\;\hbox{ is non-zero}, \\ \C^{\ast} \oplus \C^{\ast} \;\hbox{ if }\theta \neq 0 \neq \phi \end{array} \right.$$

{\it Case} (ii).   $\;r=s$, $\theta=\phi$. We have
$$Z(D)=\left \{\begin{array}{ll} \G \;\hbox{ if } \theta=0,\\
{\mathbb G}L(2, \C)\;\hbox{ otherwise }. \end{array} \right.$$

From the description of the centralizers of the conjugacy class representatives, it is clear that there are exactly seven $z$-classes in $\G$. We list the $z$-class representatives in the following.

$$(i) \begin{pmatrix}1 & 0\\0 & 1\end{pmatrix}, \;\;\; (ii) \begin{pmatrix} r & 0 \\ 0 & s \end{pmatrix},\; r \neq s,\;r,s>0\;\;\; (iii) \begin{pmatrix} 1 & 1\\0 & 1\end{pmatrix},$$ 

$$(iv) \begin{pmatrix} \lambda & 1 \\ 0 & \lambda \end{pmatrix}, \;|\lambda|=1,\;\;\; 
(v) \begin{pmatrix} \lambda & 0 \\ 0 & \mu \end{pmatrix}, \;\lambda \neq \mu,\;\;\;
(vi) \begin{pmatrix} \lambda & 0 \\ 0 & 1 \end{pmatrix},\;\;\; 
(vii) \begin{pmatrix} \lambda & 0 \\ 0 & \lambda \end{pmatrix},$$
where $\lambda$, $\mu$ are elements in $\C-\R$.

This completes the proof  of \thmref{zclass}.

\section{Proof of \thmref{mainth}}\label{pmainth}
From the embedded images of the conjugacy class representatives it follows that $a_0>0$. 
Observe that $c_1$, $c_2$,
$c_3$ are conjugacy invariants in $\G$, as well as in $P\G$. Hence we can 
choose them as the conjugacy invariants for an isometry of $\h^5$. Now there are several cases. 

\noindent {\it Case (i).} $f$ is elliptic. Then $A$ is
  conjugate to a matrix $D_{r, \theta, \phi}=\begin{pmatrix}re^{i \theta} &
  0 \\ 0 & re^{i \phi}\end{pmatrix}$,   $0 \leq \theta, \phi \leq \pi$. So, 
$$\chi(A_{\C})=(x^2-2 r\cos \theta \;x+r^2)(x^2-2 r \cos \phi \;x+r^2).$$
We have,
$c_1=(\cos \theta+\cos \phi)^2=c_3$, $c_2=2(1+2 \cos \theta \cos
\phi)$.

There are the following possibilities.

{\it $A$ acts as an $1$-rotatory elliptic}, then $\theta=\phi$. So 
\begin{eqnarray*}
c_2 &=& 4\cos^2 \theta+2 \\
&=& (c_1+2) \end{eqnarray*} 

\begin{equation}\label{1re}
\hbox{i.e. }c_2= c_1+2. \end{equation}
Also observe that in this case,
\begin{equation}c_1<4, \;c_2<6. \end{equation}

{\it A acts as a $2$-rotatory elliptic}, that is, $\theta \neq
\phi$. In this case  
\begin{eqnarray*}
c_1&=&(\cos \theta+\cos \phi)^2 \\
&=& (\cos \theta-\cos \phi)^2+4\cos \theta \cos \phi
\end{eqnarray*}
\begin{equation}\label{2ro}
\hbox{i.e. }\;\;c_1=(\cos \theta-\cos \phi)^2+c_2-2.\end{equation}
To have the equality \eqnref{1re} we must have the bracket
term zero in expression \eqnref{2ro}. This is possible only when $\theta=\phi$. Hence,  if $\theta \neq \phi$,  we must have $( \cos \theta-\cos \phi)^2>0$ and hence
$$c_2<c_1 + 2.$$ 
{\it Case (ii).} $f$ is parabolic. Then $A$ is
  conjugate to a matrix ${T_{\theta}}=\begin{pmatrix}re^{i \theta} &
  1 \\ 0 & re^{i \theta} \end{pmatrix}$, $0 \leq \theta \leq \pi$. In this
  case,
$$\chi(A_{\C})=(x^2-2 r\cos \theta \;x+r^2)^2,$$ 
$$\hbox{i.e., }\;\;a_3=2 r\cos \theta,\;a_1=2r^3 \cos \theta, \;
a_2=2r^2(2 \cos^2 \theta+1), \;a_0=r^4.$$ 
Thus  
$c_1=4 \cos^2 \theta=c_3$ and it follows immediately that 
\begin{equation}\label{1rp} 
c_2=c_1+2 
\end{equation}
Note that when $\theta \neq 0$,
\begin{equation}
c_1<4,\;c_2<6. \end{equation}
\noindent If $A$ is a translation, then 
$c_1=4$, $c_2=6$,  and we must have  $A  \neq rI$ for $r$ real. 
If $A=rI$,  then it acts as the identity map. 

\noindent{\it Case (iii). }$f$ is hyperbolic. In this case, $A$ is
  conjugate to a matrix $D_{r,s, \theta, \phi}=\begin{pmatrix}re^{i \theta} &
  0 \\ 0 & se^{i \phi} \end{pmatrix}$, $0 \leq \theta, \phi \leq \pi$,
  $r>0$, $s>0$,  $rs^{-1}\neq 1$, and 
$$\chi(A_{\C})=(x^2-2 r\cos \theta \;x+r^2)(x^2-2s \cos \phi
  \;x+s^2).$$ 
\noindent After expanding the right-hand expression we have
$\chi(A_{\C})=$
$$x^4-2(r \cos \theta+ s\cos \phi)x^3+(4rs \cos \theta \cos
  \phi+r^2+s^2)x^2-2(r^2 s\cos \phi+rs^2 \cos \theta)x+r^2s^2.$$
$$\hbox{i.e. }a_3=r \cos \theta+ s\cos \phi,$$ 
$$a_2=r^2+s^2+4rs \cos \theta \cos \phi,$$ 
$$a_1=r^2s \cos \phi+ rs^2\cos \theta,\;\;a_0=r^2s^2.$$
Thus,  
$$c_1=\frac{(r \cos \phi+s \cos \theta)^2}{rs},$$ 
$$c_2=\frac{(r^2+s^2+4rs \cos \theta \cos \phi)}{rs},$$ 
$$c_3=\frac{(r \cos \theta+s\cos \phi)^2}{rs}.$$
We have, 
 \begin{eqnarray*}
c_1=c_3
&\Leftrightarrow&   r\cos \theta+s\cos \phi=r \cos \phi+ s \cos
\theta\\
&\Leftrightarrow& (r-s)(\cos \theta-\cos \phi)=0\\
&\Leftrightarrow& \theta=\phi, \hspace{.2cm} \hbox{since } \;r \neq s.
\end{eqnarray*} 
This shows that if the isometry induced by $A$ is hyperbolic, then 
$c_1=c_3$ if and only if $A$ is
  either a  stretch or an $1$-rotatory hyperbolic.

 Now let $A$ acts as an $1$-rotatory hyperbolic, i.e.
$\theta=\phi$. Hence,
$$c_1={\frac{(r+s)^2}{rs}}\cos^2 \theta=c_3,\;\;
c_2=\frac{(r^2+s^2+4rs \cos^2 \theta)}{rs}\:.$$ 
 
Observe that  
\begin{eqnarray*}
& & c_2=c_1+2 \\
&\Leftrightarrow& \frac{r^2+s^2+4rs \cos^2
    \theta}{rs}={\frac{(r+s)^2}{rs}}\cos^2 \theta +2\\
&\Leftrightarrow & r^2(1-\cos^2\theta)+s^2(1-\cos^2 \theta)-2rs(1-\cos^2
  \theta)=0\\
& \Leftrightarrow& (r-s)^2 \sin^2 \theta=0\\
& \Leftrightarrow& \theta=0,\pi, \hspace{.2cm} \hbox{since, }\;r \neq s,
\end{eqnarray*} 
For $\theta=0,\; \pi$, $A$ induces a stretch. 

When $\theta=\phi \neq 0, \pi$, note that 
$$c_2=c_1+2+\frac{(r-s)^2}{rs} \sin^2 \theta.$$
Since $\frac{(r-s)^2}{rs} \sin^2 \theta>0$, 
hence, when $\theta \neq 0, \pi$, we must have 
$$c_2>c_1 +2.$$
If $A$ induces a stretch, then  
 $c_1=\frac{(r+s)^2}{rs}$. 
We claim that in this case, $c_1 \neq 4$. If possible suppose $c_1=4$. Then 
\begin{eqnarray*}
c_1^2=16 &\Leftrightarrow &\frac{(r+s)^4}{r^2s^2} =16 \\
&\Leftrightarrow & (r+s)^4=16r^2s^2\\
& \Leftrightarrow&  \{(r+s)^2-4rs\}\{(r+s)^2+4rs\}=0\\
& \Leftrightarrow& (r-s)^2\{(r+s)^2+4rs\}=0\\
& \Leftrightarrow& r=s,\;\hbox{since,}\;(r+s)^2+4rs>0.
\end{eqnarray*}
Which is a contradiction to the fact that $A$ induces a stretch. Hence we must have $c_1 \neq 4$ in this case. Further we have
$$c_1-4\Leftrightarrow\frac{(r+s)^2}{rs}-4=\frac{(r-s)^2}{rs}.$$
Since $\frac{(r-s)^2}{rs}>0$, hence $c_1>4$.

Finally note that from \eqnref{1re} and \eqnref{1rp} it follows that 
 $A$ acts as an $1$-rotatory elliptic or an $1$-rotatory parabolic if and only if 
$$(i)\;c_2=c_1 +2, \hbox{ and }\;(ii)\;c_1<4.$$
In this case the isometries are classified as follows. 

If $A$ is an $1$-rotatory elliptic or an $1$-rotatory parabolic, then for $r>0$ and $\theta \neq 0$, $A$ is conjugate to $D_{r, \theta,\theta}$ or $T_{r, \theta}$ respectively.  
Hence, in  both cases, we have, 
$$\chi(A_{\C})=(x^2-2r \cos \theta\;x+r^2)^2.$$ 
 If $A$ acts as an $1$-rotatory elliptic, then the minimal
polynomial of $A_{\C}$ is given by 
$$m_1(A_{\C})=x^2-2 r\cos \theta\;x+r^2=x^2-(\det A_{\C}\;c_1^2)^{\frac{1}{4}}
\;x+\sqrt{\det A_{\C}}.$$  
But when $A$ acts as an $1$-rotatory parabolic, 
$m_1$ does not annihilate $A_{\C}$. So $m_1$ can not be the minimal
  polynomial of $A_{\C}$. In this case the minimal polynomial is equal to 
the characteristic polynomial and  is given by
$$m_2(A_{\C})=(x^2-2r \cos \theta \;x+r^2)^2.$$ 

This completes the proof of \thmref{mainth}.

\begin{acknowledgement}
I thank Ravi Kulkarni for suggesting the problem and for his comments on a first draft of this paper. Thanks are also due to John Parker and Ian Short for showing interest in this work, and for their comments on it. It is Ian Short who made me aware of the papers \cite{cao}, \cite{ps}.  
\end{acknowledgement}
\begin{acknowledgement}
Finally it is a great pleasure to thank the referee for his careful reading of this paper and many useful comments. 
\end{acknowledgement}

\end{document}